\newcommand{\isom}{\cong}
\newcommand{\ssq}{\subseteq}
\newcommand{\ssnq}{\subsetneq}
\renewcommand{\o}{\varnothing}
\newcommand{\ul}[1]{\underline{#1}}
\newcommand{\comment}[1]{}
\newcommand{\V}{\mathbb{V}}
\renewcommand{\char}{\text{char}}
\renewcommand{\div}{\text{div}}
\def\a{\mathfrak{a}}
\def\b{\mathfrak{b}}
\newcommand{\m}{\mathfrak{m}}
\newcommand{\n}{\mathfrak{n}}
\newcommand{\p}{\mathfrak{p}}
\newcommand{\shF}{\mathscr{F}}
\newcommand{\shJ}{\mathscr{J}}
\newcommand{\fpow}[1]{\ensuremath ^{[p^{#1}]}}
\newcommand{\frob}{\ensuremath ^{[p]}}
\newcommand{\omop}{\ensuremath ^{1-\frac{1}{p}}}
\DeclareMathOperator{\Hom}{Hom}
\newcommand{\tensor}{\otimes}
\newcommand{\mb}{\mathbb}
\newcommand{\N}{\mb{N}}
\newcommand{\bF}{\mb{F}}
\def\al{\alpha}
\def\s{\sigma}
\renewcommand{\phi}{\varphi}
\newtheorem{thm}{Theorem}[section]
\newtheorem*{thm*}{Theorem}
\newtheorem{coro}[thm]{Corollary}
\newtheorem{prop}[thm]{Proposition}
\newtheorem{lemma}[thm]{Lemma}
\theoremstyle{remark}
\theoremstyle{definition}
\newtheorem{remark}[thm]{Remark}
\newtheorem{exa}[thm]{Example}
\newtheorem{defn}[thm]{Definition}
\newtheorem{ackn}[thm]{Acknowledgements}
\newenvironment{pfClaim}{\,{\em Proof of claim:}}{\hfill$\boxtimes$\newline\newline}
\title{A Note on Injectivity of Frobenius on Local Cohomology of Hypersurfaces} 
\author{Eric Canton}
\begin{document}
\begin{abstract}
  Let $k$ be a field of characteristic $p > 0$ such that $[k:k^p] < \infty$ and let $f \in R = k[x_0, \dots, x_n]$ be homogeneous of degree $d$. We obtain a sharp
  bound on the degrees in which the Frobenius action on $H^n_\m(R/fR)$ can be injective when $R/fR$ has an isolated non-F-pure point at $\m$. As a corollary,
  we show that if $(R/fR)_\m$ is not F-pure then $R/fR$ has an isolated non-F-pure point at $\m$ if and only if the Frobenius action is injective in degrees
  $\le -n(d-1)$. 
\end{abstract}
\maketitle
\section{Introduction}

Let $k$ be a field of characteristic $p$ such that $[k:k^p] < \infty$ and let $f \in R = k[x_0, \dots, x_n]$ be a homogeneous polynomial of degree $d$. For 
simplicity, assume that the test ideal $\tau(f\omop) = \m^j$ for some $j \ge 1$, where $\m = (x_0, \dots, x_n)$. 
Our main theorem obtains the following sharp bound on the degrees in which the Frobenius action on $H^n_\m(R/fR)$ is injective.

\begin{thm*}[Theorem {\ref{mainthm}}]
  If $\tau(f\omop) = \m^j \ssq \m$, then the below Frobenius action is injective: 
  \[ F: H^n_\m(R/fR)_{< -n-j + d} \to H^n_\m(R/fR)_{<p(-n-j+ d)}. \]
\end{thm*}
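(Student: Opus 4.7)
The plan is to translate the claim into an ideal-theoretic condition inside $R$ via the Cartier operator and finish with a socle computation in $R/\m^{[N]}$. First, the short exact sequence $0 \to R(-d) \xra{f} R \to R/fR \to 0$, combined with the Cohen--Macaulayness of $R$ and $R/fR$, yields
\[
0 \to H^n_\m(R/fR) \to H^{n+1}_\m(R)(-d) \xra{f} H^{n+1}_\m(R) \to 0,
\]
identifying $H^n_\m(R/fR)_\alpha$ with $\ker\bigl(f\colon H^{n+1}_\m(R)_{\alpha-d} \to H^{n+1}_\m(R)_\alpha\bigr)$. A direct \v{C}ech-complex computation shows that under this identification the Frobenius on $H^n_\m(R/fR)$ becomes $\beta \mapsto f^{p-1}\beta^p$, the twist by $f^{p-1}$ being forced by matching degrees against the connecting map. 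I would then represent any such $\beta$ as the class $[g/(x_0 \cdots x_n)^N]$ with $g \in R$ homogeneous of degree $(n+1)N + \alpha - d$, so that the vanishing $F\beta = 0$ becomes exactly $f^{p-1}g^p \in \m^{[pN]}$.

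The main algebraic step is to feed in an arbitrary auxiliary parameter and apply the canonical Cartier map $\Phi_R\colon F_*R \to R$. Since $f^{p-1}g^p r \in \m^{[pN]}$ for every $r \in R$, and $\Phi_R$ is $R$-linear (using $g^p r = g \cdot_{F_*R} r$) with $\Phi_R(\m^{[pN]}) = \m^{[N]}$, one has
\[
g \cdot \Phi_R(f^{p-1}r) \;=\; \Phi_R(f^{p-1}g^p r) \;\in\; \m^{[N]};
\]
letting $r$ range over $R$ gives $g \cdot \Phi_R(f^{p-1}R) \ssq \m^{[N]}$. The key input is the identification $\Phi_R(f^{p-1}R) = \tau(f^{1-1/p})$, which I would derive from the Blickle--Musta\c{t}\u{a}--Smith formula $\tau(f^{1-1/p}) = \bigcup_{e \ge 1} \Phi_R^e(f^{p^e - p^{e-1}} R)$ together with the iterative identity $\Phi_R(f^{pm}R) = f^m R$ (valid for any $m \ge 0$ by $R$-linearity of $\Phi_R$), which shows inductively that $\Phi_R^e(f^{p^e - p^{e-1}} R) = \Phi_R(f^{p-1}R)$ for every $e \ge 1$. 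With $\tau(f^{1-1/p}) = \m^j$ this becomes $g \cdot \m^j \ssq \m^{[N]}$.

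To finish, I would run a short socle computation in the Artinian ring $R/\m^{[N]}$: a basis monomial $x^a$ with $0 \le a_i \le N-1$ annihilates $\m^j$ if and only if $\sum_i (N - 1 - a_i) < j$, equivalently $|a| > (n+1)(N-1) - j$. Hence any homogeneous element of $(\m^{[N]} : \m^j)$ of degree at most $(n+1)(N-1) - j$ already lies in $\m^{[N]}$. The hypothesis $\alpha < -n - j + d$ translates precisely to $\deg g \le (n+1)(N-1) - j$, forcing $g \in \m^{[N]}$ and thus $\beta = 0$. The main obstacle will be the \v{C}ech-level verification that Frobenius transports to $\beta \mapsto f^{p-1}\beta^p$, together with the careful bookkeeping of grading shifts; the sharpness of the degree bound is exactly what the final socle computation extracts.
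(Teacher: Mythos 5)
Your proposal is correct and follows the same overall strategy as the paper: reduce via the long exact sequence to injectivity of $f^{p-1}F$ on $H^{n+1}_\m(R)[-d]$, represent a class as $[g/(x_0\cdots x_n)^N]$, translate the vanishing of $f^{p-1}F$ into the ideal-theoretic condition $g\cdot\tau(f\omop) \ssq \m^{[N]}$, and extract the degree bound. The paper reaches that ideal-theoretic condition via a colon-ideal manipulation justified by Kunz's flatness theorem, namely $(\m\fpow{e+1}:g^p)=(\m\fpow{e}:g)\frob$ together with the minimality property in the definition of $\tau$, whereas you reach it by the $R$-linearity of the Cartier trace map $\Phi_R$; these are dual formulations of the same fact, and both are standard. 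The other difference is that you compute the sharp degree bound by a direct socle calculation in the Artinian ring $R/\m^{[N]}$ specialized to $\tau=\m^j$, while the paper runs the argument through the general machinery of $M_e$ and $\delta(f)$ (Lemmas~\ref{MeDecr}--\ref{MeLemma}), which is needed for the more general Theorem~\ref{mainthm} where $\tau(f\omop)$ is only assumed $\m$-primary; for $\tau=\m^j$ your direct computation matches the remark after Lemma~\ref{MeLemma} that $\delta(f)=-n-j$. One small simplification available to you: the identification $\Phi_R(f^{p-1}F_*R)=\tau(f^{1-1/p})$ is immediate from the paper's Definition~\ref{defnPairs} and Remark~\ref{testGens} (both are the ideal generated by the coefficients of $f^{p-1}$ in an $R^p$-basis of $R$), so the detour through the Blickle--Musta\c{t}\u{a}--Smith union formula and the iterative identity $\Phi_R(f^{pm}R)=f^mR$ is not needed.
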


Our assumption that $\tau(f\omop) = \m^j$ implies that while $(R/fR)_\m$ is not F-pure, $(R/fR)_\p$ is F-pure for every prime $\p\ssnq\m$. We say such 
rings have an isolated non-F-pure point at $\m$. The study of F-pure rings has a long history and their theory is rich: 
Hochster and Roberts first defined F-pure rings and explored the relationship of F-purity to local cohomology (and the Frobenius action thereof) in \cite{HochRob76}.
Fedder continued this program of study, obtaining a criterion for F-purity and showing the equivalence of F-purity and F-injectivity for local Gorenstein rings of 
characteristic $p$ \cite{Fed}.

A corollary to our main theorem is that when $(R/fR)_\m$ is not F-pure, $R/fR$ has an isolated non-F-pure point at $\m$ if and only if Frobenius acts 
injectively in sufficiently negative degrees. Moreover, the degree in which it must be injective depends only on the degree of $f$. 

\begin{thm*}[Corollary {\ref{corollary}}]
  If $(R/fR)_\m$ is not F-pure then $R/fR$ has an isolated non-F-pure point at $\m$ if and only if the below Frobenius action is injective:
  \[ F: H^n_\m(R/fR)_{\le -n(d-1)} \to H^n_\m(R/fR)_{\le -pn(d-1)}. \]
\end{thm*}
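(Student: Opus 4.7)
The plan is to combine Theorem~\ref{mainthm} with graded local duality, which identifies the kernel $K := \ker(F)$ of the Frobenius on $H^n_\m(R/fR)$ with the graded Matlis dual of $R/(fR+\tau)$ (where $\tau = \tau(f\omop)$), shifted by the $a$-invariant $d-n-1$ of $R/fR$. Concretely, $K_t \ne 0$ if and only if $R/(fR+\tau)$ is nonzero in degree $d-n-1-t$. Under this dictionary, both directions of the corollary become inequalities on degree ranges.

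For the direction $(\Leftarrow)$, suppose $F$ is injective in degrees $\le -n(d-1)$. Then $K_t = 0$ for all such $t$, so $R/(fR+\tau)$ vanishes in every degree $\ge (d-n-1) - (-n(d-1)) = (n+1)(d-1) - n$. In particular, $R/(fR+\tau)$ has bounded top degree and hence finite length, which forces $\tau + (f)$ to be $\m$-primary in $R$; this is exactly the isolated-non-F-pure-point condition at $\m$ and does not require the additional hypothesis $\tau = \m^j$ of Theorem~\ref{mainthm}. For the direction $(\Rightarrow)$, assume the non-F-pure locus is $\{\m\}$ and work in the simplified setting $\tau = \m^j$ of Theorem~\ref{mainthm}. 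The task reduces to proving $j \le (n+1)(d-1) - n$: substituting into the theorem yields $-n-j+d \ge -n(d-1)+1$, so injectivity in degrees $<-n-j+d$ already covers injectivity in degrees $\le -n(d-1)$. To derive this bound I would argue that isolatedness of the non-F-pure locus forces a suitable Fedder-style ideal (such as generators of $(\m^{[p]} : f^{p-1})$, whose cutout is precisely the non-F-pure locus by Fedder's criterion) to form a homogeneous complete intersection of $n+1$ forms of degree $d-1$; Macaulay's socle-degree formula then places $\m^{(n+1)(d-2)+1} = \m^{(n+1)(d-1)-n}$ inside this ideal, and containment in $\tau$ yields $j \le (n+1)(d-1) - n$.

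The main obstacle is pinning down the correct Fedder-style ideal and proving both its complete-intersection structure and its containment in $\tau(f\omop)$. In positive characteristic the loci of non-F-purity and of singularity need not coincide, so one cannot simply invoke the classical Jacobian ideal, and the partials themselves may vanish for non-geometric reasons (e.g., in $p$th-power directions). Once the required initial-degree bound on $\tau$ is secured by working directly with Frobenius-theoretic invariants, the graded-duality dictionary turns both directions of the corollary into formal comparisons of graded ranges.
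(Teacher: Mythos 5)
Your duality dictionary is correct: by the snake lemma (using F-purity of $R$ so that $F$ on $H^{n+1}_\m(R)$ is injective), $\ker\bigl(F : H^n_\m(R/fR)\to H^n_\m(R/fR)\bigr)$ is isomorphic to $\ker\bigl(f^{p-1}F\bigr)$ on $H^{n+1}_\m(R)[-d]$, which the test-ideal computation in the proof of Theorem~\ref{mainthm} identifies with $\operatorname{ann}_E\bigl(\tau(f\omop)\bigr) \isom \bigl(R/\tau(f\omop)\bigr)^\vee$, shifted so that $\ker(F)_t\ne 0 \iff \bigl(R/\tau(f\omop)\bigr)_{\,d-n-1-t}\ne 0$. (You wrote $R/(fR+\tau)$; the snake lemma actually gives $R/\tau$, but since $\V(\tau)\ssq\V(f)$ the two ideals have the same radical, so your $(\Leftarrow)$ argument goes through either way.) Your $(\Leftarrow)$ direction is then complete and is a clean repackaging of the paper's contrapositive argument: injectivity in degrees $\le -n(d-1)$ forces $\m^{(n+1)(d-1)-n}\ssq \tau(f\omop)$, hence $\sqrt{\tau(f\omop)}=\m$.

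The $(\Rightarrow)$ direction has a genuine gap, which you partly acknowledge. First, the reduction to $\tau(f\omop)=\m^j$ is unnecessary: Theorem~\ref{mainthm} is stated and proved under the weaker hypothesis $\sqrt{\tau(f\omop)}=\m$, so you can (and must) work with general $\tau$. Second, and more seriously, the assertion that ``a suitable Fedder-style ideal forms a homogeneous complete intersection of $n+1$ forms of degree $d-1$'' is both unproven and not correct as stated. The relevant ideal is $\tau(f\omop)$ itself, and its generators $f_{\ul i,b}$ (Remark~\ref{testGens}) need not have degree exactly $d-1$; what is true, and what the argument actually needs, is the inequality $\deg f_{\ul i,b}\le d-1$, which follows from $p\deg f_{\ul i,b}\le (p-1)d$. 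The isolated-non-F-pure hypothesis gives that $\tau(f\omop)$ has height $n+1$, hence (since $R$ is Cohen--Macaulay) one can choose $n+1$ of these generators forming a maximal regular sequence $f_0,\dots,f_n$; Macaulay's socle-degree computation for the complete intersection $(f_0,\dots,f_n)$ then gives $\m^{(\sum\deg f_i)-n}\ssq(f_0,\dots,f_n)\ssq\tau(f\omop)$, and since $\sum\deg f_i\le (n+1)(d-1)$ you get exactly the containment $\m^{(n+1)(d-1)-n}\ssq\tau(f\omop)$ that your duality dictionary requires. This is precisely the Hilbert-series step the paper carries out (following \cite[Lemma 3.1]{BS}); without it, your proposal establishes only one direction of the corollary.
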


In their study of the F-pure thresholds of Calabi-Yau hypersurfaces, Bhatt and Singh proved a similar result \cite[Theorem 3.5]{BS} under the assumption
that $R/fR$ has an isolated singularity at $\m$. Their methods generalize well to the setting of this paper. The relationship between isolated singularities and
isolated non-F-pure points is as follows: regular rings are F-pure, so $\{\text{non-F-pure points of $R/fR$}\} \ssq \V(f)_{sing}$. Thus if $(R/fR)_\m$ is not 
F-pure and has an isolated singularity, it follows that it has an isolated non-F-pure point. Interesting examples of these phenomena often arise as
affine cones over smooth projective varieties. 

\begin{ackn}
  I want to thank my advisor Wenliang Zhang for suggesting this problem to me and for useful discussions. 
\end{ackn}

\section{Main result}
The {\em Frobenius map} on a ring $A$ of prime characteristic $p > 0$ is the ring homomorphism $F: A \to A$ given by $F(a) = a^p$. We say that $A$ is 
F-finite if $A$ is a finitely generated module over $F(A) = A^p$. 

We fix notation: throughout, $k$ will denote an F-finite field of characteristic $p > 0$. Let $R = k[x_0, \dots, x_n]$ be the polynomial ring 
in $n+1$ variables over $k$ and $f \in R$ be homogeneous of degree $d$. Note that in this case $R$ is F-finite. Several of the definitions we provide 
(\ref{defnPairs}, \ref{defnNonFP}) rely on F-finiteness for their equivalence to other definitions in the literature.  
Denote by $\m = (x_0, \dots, x_n)$ the homogeneous maximal ideal of $R$. For an ideal $I \ssq R$ and a natural number $e \in \N_0$ we denote by 
$I\fpow{e} = (u^{p^e} \, | \, u \in I)$.  

We recall a special case of the test ideals introduced by Hara and Yoshida \cite{HY} and extended to pairs by Takagi \cite{Tak}. The definition we use is 
\cite[Definition 2.9]{BMS}; proposition 2.22 from the same paper shows the equivalence of the definitions in the regular F-finite case. The test ideal serves
as a positive characteristic analog to the multiplier ideal $\shJ(X, \a^t)$ studied in complex algebraic geometry. 
We refer the reader to \cite[Ch. 9 and 10]{Laz} for an introduction to multiplier ideals. 

\begin{defn}[Test ideal, {\cite[Definition 2.9]{BMS}}]\label{defnPairs}
  The {\em test ideal} $\tau(f^{1-\frac{1}{p^e}})$ is the smallest ideal $\a \ssq R$ such that
  \[ f^{p^e-1}\in \a\fpow{e}. \]
\end{defn}
\begin{remark}\label{testGens}
  Proposition 2.5 from \cite{BMS} gives a useful description of $\tau(f^{1-\frac{1}{p^e}})$: let $\{\lambda_b\}_{b \in B}$ be a basis for $k$ over $k^{p^e}$. 
  The elements $\lambda_b x^{\ul{i}} = \lambda_b x_0^{i_0}\cdots x_n^{i_n}$ with 
  $0 \le i_j \le p^e-1$ and $b \in B$ form an $R^{p^e}$-basis for $R$, so we can express $f^{p^e-1}$ as an $R^{p^e}$-linear combination
  \[ f^{p^e-1} = \sum f_{\ul{i},b}^{p^e} \lambda_bx^{\ul{i}}. \]
  Then the test ideal $\tau(f^{1-\frac{1}{p^e}})$ is the ideal generated by the $f_{\ul{i},b}$ for all $\ul{i}$ and $b$
  appearing above. That is,
  \[ \tau(f^{1-\frac{1}{p^e}}) = (f_{\ul{i},b}\, |\, 0 \le i_j \le p^e-1;\, b\in B). \]
\end{remark}

If the Frobenius map $F: A \to A$ is pure, then we say that $A$ is {\em F-pure.} 
The corresponding notion in characteristic $0$ is that of log canonical (lc) points, and the set of non-lc points is obtained as the vanishing set of the non-lc 
ideal. Fujino, Schwede, and Takagi initiated development of the theory of {\em non-F-pure ideals} in \cite[Section 14]{FST}. As one might expect, the vanishing
locus of the non-F-pure ideal is precisely the set of primes for which $(R/fR)_\p$ fails to be F-pure. We caution the reader that the definition we give is 
specific to the case considered in this note; 
see \cite[Definition 14.4]{FST} for the general definition. 

\begin{defn}[non-F-pure ideal; {\cite[Remark 16.2]{FST}}]\label{defnNonFP}
  The {\em non-F-pure ideal} of $f$, denoted $\s(\div(f))$, is defined to be
  \begin{align*}
    \s(\div(f)) &= \tau(f^{1-\frac{1}{p^e}}) \text{ for $e \gg 0$.}
  \end{align*}
\end{defn}

\begin{prop}\label{radicalProp}
  $\sqrt{\tau(f\omop)} = \sqrt{\s(\mathrm{div}(f))}$. 
\end{prop}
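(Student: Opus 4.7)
The plan is to show that the vanishing loci $\V(\tau(f\omop))$ and $\V(\s(\div(f)))$ coincide in $\Spec R$, which is equivalent to the stated equality of radicals. Since $\s(\div(f)) = \tau(f\omope)$ for $e \gg 0$, it suffices to prove that for every $e \ge 1$ and every prime $\p \ssq R$,
\[
\tau(f\omop) \ssq \p \iff \tau(f\omope) \ssq \p.
\]

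The first step is a prime-by-prime characterization of the test ideal: for any prime $\p$ and every $e \ge 1$,
\[
\tau(f\omope) \ssq \p \iff f^{p^e-1} \in \p\fpow{e} R_\p.
\]
Using that test ideals commute with localization in the F-finite regular setting (cf.~\cite{BMS}), both directions follow from Definition \ref{defnPairs} applied to $R_\p$: the forward direction is immediate from the defining membership $f^{p^e-1} \in \tau(f\omope)\fpow{e}$, and the reverse follows from the minimality of $\tau_{R_\p}(f\omope)$ together with the fact that $\p R_\p$ is a candidate ideal satisfying the defining property.

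The second step is to observe that the right-hand condition above is independent of $e \ge 1$. The direction from $e = 1$ to arbitrary $e$ is elementary: $f^{p^e-1} = (f^{p-1})^{p^{e-1}} \cdot f^{p^{e-1}-1}$ together with $(\p\frob)^{[p^{e-1}]} = \p\fpow{e}$ gives $f^{p-1} \in \p\frob R_\p \Rightarrow f^{p^e-1} \in \p\fpow{e}R_\p$. The reverse implication is the content of Fedder's criterion combined with the equivalence of F-purity with F$^e$-purity: if $f^{p^e-1} \in \p\fpow{e} R_\p$, then $R_\p/fR_\p$ fails to be F$^e$-pure, hence fails to be F-pure (since $F$ pure implies $F^e$ pure), and so Fedder yields $f^{p-1} \in \p\frob R_\p$. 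Combining the two steps, $\V(\tau(f\omope)) = \V(\tau(f\omop))$ for every $e \ge 1$, and the proposition follows.

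The principal obstacle is the second step---specifically, producing $f^{p-1} \in \p\frob R_\p$ from $f^{p^e-1} \in \p\fpow{e} R_\p$ for $e > 1$. This ultimately rests on Fedder's criterion in its generalized form for $F^e$, together with the stability of F-purity under iteration of Frobenius. The remaining ingredients---compatibility of test ideals with localization, the minimality characterization of $\tau$, and the factorization of Frobenius powers---are routine manipulations.
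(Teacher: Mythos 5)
Your proposal is correct, but it takes a noticeably different route from the paper. The paper's proof is asymmetric: it observes up front that $\s(\div(f)) \ssq \tau(f\omop)$ follows directly from the definitions (the ideals $\tau(f\omope)$ form a descending chain in $e$), which handles one inclusion of radicals for free, and then establishes the remaining inclusion by localizing and applying Fedder's criterion \emph{once}, at the $e=1$ level, together with the fact from \cite{FST} that $\V(\s(\div(f)))$ is the non-F-pure locus. Your proof instead establishes a prime-by-prime biconditional $\tau(f\omop) \ssq \p \iff \tau(f\omope) \ssq \p$ directly for every $e$, bypassing the descending-chain observation but paying for it by invoking the $F^e$-level Fedder criterion and the equivalence of F-purity with $F^e$-purity. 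Both arguments rest on the same three pillars (localization of test ideals, minimality, and Fedder), and your two-step reduction is sound; the trade-off is that your argument is more self-contained---it does not lean on the characterization of $\V(\s(\div(f)))$ as the non-F-pure locus as a black box---but at the cost of needing the slightly heavier $F^e$-version of Fedder's criterion, which the paper avoids entirely. If you wanted to streamline to match the paper, you could drop your ``hard'' direction of step two (arbitrary $e$ back to $e=1$) and replace it with the definitional containment $\s(\div(f)) \ssq \tau(f\omop)$; the remaining ``easy'' direction of step two is then exactly the one Fedder application the paper performs.
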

\begin{proof}
  It follows from the definitions that $\s(\div(f)) \ssq \tau(f\omop)$, so it is enough to show that if $\tau(f\omop) \not\ssq \p$ for some prime 
  $\p$ then $\s(\div(f)) \not\ssq \p$. Since $\s(\div(f))$ is the non-F-pure ideal, we check that $(R/fR)_\p$ is F-pure. 

  By assumption, $\tau(f\omop)_\p = R_\p$. Since test ideals localize \cite[Propostion 2.13(1)]{BMS} it follows that $f^{p-1} \not\in (\p R_\p)\frob$. 
  Fedder's Criterion \cite[Theorem 1.12]{Fed} now implies that $(R/fR)_\p$ is F-pure, and so $\s(\div(f)) \not\ssq \p$. 
\end{proof}

\begin{defn}[isolated non-F-pure point]
  We say that $R/fR$ has an {\em isolated non-F-pure point} at $\m$ if $(R/fR)_\m$ is not F-pure but $(R/fR)_\p$ is whenever $\p \ssnq \m$. 
\end{defn}

\begin{remark}
  The vanishing set $\V(\s(\div(f)))$ is precisely the set of points $\p \in \V(f)$ such that $(R/fR)_\p$ is not F-pure. Proposition \ref{radicalProp} now says 
  that the ideal $\tau(f\omop)$ also defines this locus. Therefore, $R/fR$ has an isolated non-F-pure point at $\m$ if and only if $\sqrt{\tau(f\omop)} = \m$.
\end{remark}

\begin{defn}\label{MeDefn}
  Let $e_0\in \N_0$ be the least integer such that $\tau(f\omop) \not\ssq \m\fpow{{e_0}}$. For $e \ge e_0$ define 
  \[ M_e := \min\{\deg(g) \,\,|\,\, g \in (\m\fpow{e}:\tau(f\omop)) \setminus \m\fpow{e} \text{ homogeneous}\}. \]
  Here we adopt the convention $\min \o = \infty$. 
\end{defn}

\newpage

\begin{lemma}\label{MeDecr}
  $M_{e+1} - (n+1)p^{e+1} \le M_{e} - (n+1)p^{e}$ for all $e \ge e_0$. 
\end{lemma}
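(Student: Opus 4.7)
The natural strategy is to manufacture a witness for $M_{e+1}$ out of a witness for $M_e$ by multiplying by a carefully chosen monomial. If $M_e = \infty$ the inequality is vacuous, so I assume $M_e$ is finite and fix a homogeneous element $g$ of degree $M_e$ lying in $(\m\fpow{e}:\tau(f\omop)) \setminus \m\fpow{e}$. I would then consider
\[
  g' := g \cdot (x_0 x_1 \cdots x_n)^{p^e(p-1)},
\]
which is homogeneous of degree $M_e + (n+1)p^e(p-1) = M_e + (n+1)(p^{e+1} - p^e)$, and argue that $g'$ belongs to $(\m\fpow{e+1}:\tau(f\omop)) \setminus \m\fpow{e+1}$. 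This immediately gives $M_{e+1} \le \deg g'$, which rearranges to the claimed bound.

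The colon containment is the easy half of the argument: since $g\tau(f\omop) \ssq (x_0^{p^e}, \dots, x_n^{p^e})$, each generator $x_i^{p^e}$ gets sent, after multiplication by $(x_0\cdots x_n)^{p^e(p-1)}$, to $x_i^{p^{e+1}}\prod_{j\ne i} x_j^{p^e(p-1)} \in \m\fpow{e+1}$, so $g'\tau(f\omop) \ssq \m\fpow{e+1}$.

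The non-containment $g' \notin \m\fpow{e+1}$ is the subtler half and the step I expect to be the main obstacle. The key observation is that $\m\fpow{e}$ is the monomial ideal generated by those monomials in which at least one variable appears with exponent at least $p^e$; consequently a homogeneous polynomial fails to lie in $\m\fpow{e}$ precisely when its monomial expansion contains some term $c_{\ul{i}} x^{\ul{i}}$ with $c_{\ul{i}} \ne 0$ and $0 \le i_j \le p^e - 1$ for every $j$. Applied to $g$, this furnishes such a monomial; multiplying by $(x_0\cdots x_n)^{p^e(p-1)}$ raises each exponent by $p^e(p-1)$, producing a monomial whose exponents are all at most $p^{e+1} - 1$ and therefore not in $\m\fpow{e+1}$. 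Because multiplication by a fixed monomial is injective on the monomial basis of $R$, this surviving term cannot be cancelled by other contributions to $g'$, so $g' \notin \m\fpow{e+1}$, completing the proof.
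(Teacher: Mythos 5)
Your proof is correct and takes essentially the same approach as the paper: multiply a minimal-degree homogeneous witness $g$ for $M_e$ by $(x_0\cdots x_n)^{p^e(p-1)}$ and check that the product remains in $(\m\fpow{e+1}:\tau(f\omop))\setminus\m\fpow{e+1}$. The only cosmetic difference is how the non-membership in $\m\fpow{e+1}$ is verified — the paper invokes the colon-ideal identity $(\m\fpow{e+1}:(x_0\cdots x_n)^{p^{e+1}-p^e}) = \m\fpow{e}$, whereas you track a surviving monomial term in the $k$-basis of $R$ — but these are the same observation phrased differently.
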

\begin{proof}
  Note that $M_e = \infty$ for $e\ge e_0$ if and only if $\tau(f\omop) = R$; in this case there is no content to the lemma. Thus we assume that $M_e < \infty$. 
  For simplicity of notation, write $\tau = \tau(f\omop)$. 
  Let $r$ be a homogeneous element of $(\m\fpow{{e}}:\tau)\setminus \m\fpow{{e}}$ with minimum degree 
  $M_{e}$. Then for each term $t$ of every generator $f_{\ul{i},b}$ for $\tau$ (as in Remark \ref{testGens}) we have 
  that $\deg_{x_j}(rt) \ge p^{e}$ for some $0 \le j \le n$. Thus, 
  \begin{align*}
    \deg_{x_j}((x_0\cdots x_n)^{p^{e+1}-p^{e}}rt) &= p^{e+1} - p^{e} + \deg_{x_j}(rt) \ge p^{e+1}
  \end{align*}
  so that $(x_0\cdots x_n)^{p^{e+1}-p^{e}}r \in (\m\fpow{{e+1}}:\tau)$. Since $(\m\fpow{{e+1}}:(x_0\cdots x_n)^{p^{e+1}-p^{e}}) = \m\fpow{{e}}$, 
  we know 
  \[ (x_0\cdots x_n)^{p^{e+1}-p^{e}}r \not\in \m\fpow{{e+1}}. \]
  It follows that $M_{e+1} \le M_{e} + (n+1)(p^{e+1}-p^{e})$. 
\end{proof}

\begin{lemma}\label{MeLemma}
  Assume $(R/fR)_\m$ is not F-pure. Then $R/fR$ has an isolated non-F-pure point at $\m$ if and only if $M_e -(n+1)p^e$ is constant for $e \gg 0$. 
\end{lemma}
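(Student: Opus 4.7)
Write $\tau := \tau(f\omop)$; by definition and Proposition~\ref{radicalProp}, $R/fR$ has an isolated non-F-pure point at $\m$ iff $\sqrt\tau = \m$, i.e.\ iff $\tau$ is $\m$-primary. Note also that the non-F-purity of $(R/fR)_\m$ forces $\tau \ssq \m$ by Fedder's criterion. My plan is to rewrite $M_e$ in terms of the top graded degree of $R/(\tau+\m\fpow{e})$ via graded Matlis duality on the Artinian Gorenstein algebra $A_e := R/\m\fpow{e}$, and then check the two cases directly.

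First the duality. The algebra $A_e$ has $1$-dimensional socle spanned by $(x_0\cdots x_n)^{p^e-1}$ in degree $s_e := (n+1)(p^e-1)$, and multiplication defines a perfect graded pairing $(A_e)_i \times (A_e)_{s_e-i} \to (A_e)_{s_e} \isom k$: on monomials, $x^{\ul u}$ pairs nontrivially with $x^{\ul v}$ iff $\ul v = (p^e-1)\ul 1 - \ul u$, giving a permutation matrix in the monomial bases. For $J := (\tau + \m\fpow{e})/\m\fpow{e}$, the annihilator of $J_{s_e - i}$ inside $(A_e)_i$ under this pairing is exactly $(0:_{A_e}J)_i$, so
\[ \dim_k (0:_{A_e}J)_i = \dim_k (A_e)_{s_e - i} - \dim_k J_{s_e - i} = \dim_k (A_e/J)_{s_e - i}. \]
Since $(0:_{A_e}J) = (\m\fpow{e}:\tau)/\m\fpow{e}$ and $A_e/J = R/(\tau+\m\fpow{e})$, setting $D_e := \max\{d : (R/(\tau+\m\fpow{e}))_d \ne 0\}$ yields $M_e = s_e - D_e$, hence
\[ M_e - (n+1)p^e = -(n+1) - D_e. \]
Thus $M_e - (n+1)p^e$ is eventually constant iff $D_e$ is.

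For the forward direction, if $\tau$ is $\m$-primary then $\m^N \ssq \tau$ for some $N$, and once $p^e \ge N$ we have $\m\fpow{e} \ssq \m^{p^e} \ssq \tau$, so $\tau + \m\fpow{e} = \tau$ and $D_e$ equals the finite top degree of $R/\tau$, independent of $e$. For the reverse, I argue the contrapositive: if $\tau$ is not $\m$-primary then $\sqrt\tau \ssnq \m$, so some variable (say $x_0$) satisfies $x_0^k \notin \tau$ for every $k \ge 0$. For each $e$, any homogeneous decomposition $x_0^{p^e-1} = a + b$ with $a\in\tau$, $b\in\m\fpow{e}$ forces $b = 0$ (since $(\m\fpow{e})_{p^e-1}=0$), which gives $x_0^{p^e-1} = a \in \tau$, a contradiction. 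So $D_e \ge p^e - 1 \to \infty$ and $M_e - (n+1)p^e \to -\infty$, precluding eventual constancy.

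The main technical step is the Matlis duality identification $M_e = s_e - D_e$ in the second paragraph; once it is in hand, the case analysis for $D_e$ is elementary from standard properties of $\m$-primary versus non-$\m$-primary homogeneous ideals.
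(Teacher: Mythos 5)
Your proof is correct, and it takes a genuinely different route from the paper, though both hinge on the same structural fact (Artinian Gorenstein duality in $R/\m\fpow{e}$). The paper uses ungraded Matlis duality to convert colon-ideal containments into ideal containments, i.e., it establishes that $(\m\fpow{e}:\tau) \ssq (\m\fpow{e}:\m^\ell)$ for $e\gg 0$ is equivalent to $\m^\ell \ssq \tau$, then imports the explicit description of $(\m\fpow{e}:\m^\ell)$ from \cite[Lemma 3.2]{BS} to translate this into a lower bound $M_e - (n+1)p^e \ge -n-\ell$, and finally invokes the monotonicity from Lemma~\ref{MeDecr} to upgrade ``bounded below'' to ``eventually constant.'' Your approach short-circuits all of this: the graded perfect pairing on $R/\m\fpow{e}$ gives the exact identity $M_e - (n+1)p^e = -(n+1) - D_e$, where $D_e$ is the top nonzero degree of $R/(\tau + \m\fpow{e})$, and then both directions become one-line observations about whether $\tau + \m\fpow{e}$ stabilizes. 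In particular, you do not need Lemma~\ref{MeDecr} (the monotonicity of $M_e - (n+1)p^e$ is instead a consequence of $D_e$ being nondecreasing, which is immediate from $\m\fpow{e+1}\ssq\m\fpow{e}$), nor do you need the Bhatt--Singh colon formula. Your closed-form expression also directly recovers the remark following the lemma: if $\tau = \m^j$ then $D_e = j-1$ for all $e\ge e_0$, giving $M_e - (n+1)p^e = -n-j$. One minor point worth making explicit in the contrapositive step: the existence of a variable outside $\sqrt{\tau}$ follows because $\sqrt{\tau}$ is a homogeneous ideal strictly contained in $\m$, so it cannot contain all of $x_0,\dots,x_n$; this is easy but deserves a sentence.
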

\begin{proof}
  For simplicity, write $\tau := \tau(f\omop)$. 
  If $\tau\ssq\m$ then $(\m\fpow{e}:\tau) \ne \m\fpow{e}$ for any $e$, so $M_e < \infty$ for all $e$ in this case. Since we are assuming $(R/fR)_\m$ is not
  F-pure, we conclude that $M_e < \infty$ for all $e$.

  $R/fR$ has an isolated non-F-pure point at $\m$ if and only if 
  $\sqrt{\tau} = \m$, which is equivalent to $\m^\ell \ssq \tau$ for some $\ell \ge 1$.   

  \noindent{\bf Claim:} $(\m\fpow{e}:\tau) \ssq (\m\fpow{e}:\m^\ell)$ for all $e \gg 0$ if and only if $\m^\ell \ssq \tau$. 

  \begin{pfClaim}
    Let $(A, \n)$ be a 0-dimensional Gorenstein local ring and let $L \ssq A$ be an ideal. Write $(-)^\vee$ for the Matlis dual $\Hom_A(-, E_A(A/\n))$
    and note that $A \isom E_A(A/\n)$ since $A$ is 0-dimensional and Gorenstein. Then 
    \begin{align*}
      (0:L) &\isom \Hom_A(A/L, A) \\
            &\isom (A/L)^\vee. 
    \end{align*}
    Now applying the Matlis dual again, we get $A/L \isom (A/L)^{\vee\vee} \isom (0:L)^\vee$ where the first isomorphism follows from finite length of $A/L$. 
    Let $I, J \ssq A$ be two ideals. If $(0:J) \ssq (0:I)$ then we have an exact sequence
    \[ 0 \to (0:J) \to (0:I) \]
    which we dualize to get
    \[ A/I \to A/J \to 0. \]
    Thus, if $A$ is a 0-dimensional Gorenstein ring and $I, J$ are two ideals of $A$ then $(0:J) \ssq (0:I)$ if and only if $I\ssq J$. 

    Note that $R/\m\fpow{e}$ is a 0-dimensional Gorenstein ring for all $e \ge 0$. The above paragraph shows that $(\m\fpow{e}:\tau) \ssq (\m\fpow{e}:\m^\ell)$ 
    if and only if
    $\m^\ell + \m\fpow{e} \ssq \tau + \m\fpow{e}$. For $e \gg 0$, $\m\fpow{e} \ssq \m^\ell$ so this last reads $\m^\ell \ssq \tau + \m\fpow{e}$ for all $e \gg 0$. 
    Therefore
    \[ \m^\ell \ssq \bigcap_{e \gg 0} (\tau + \m\fpow{e}). \]
    This intersection is $\tau$ by Krull's intersection theorem. We conclude that $(\m\fpow{e}:\tau) \ssq (\m\fpow{e}:\m^\ell)$ for $e \gg 0$ if and only if 
    $\m^\ell \ssq \tau$. 
  \end{pfClaim}
  The proof of \cite[Lemma 3.2]{BS} shows that 
  \[ (\m\fpow{e}:\m^\ell) = \m\fpow{e} + \m^{(n+1)p^e - n - \ell} \text{ for $e \gg 0$}. \]
  Thus we have that $\sqrt{\tau} = \m$ if and only if $M_e \ge (n+1)p^e - n - \ell$ for $e \gg 0$ and some $\ell \ge 1$. Lemma \ref{MeDecr} shows that 
  \[ M_{e+1} - (n+1)p^{e+1} \le M_e - (n+1)p^e \]
  for all $e \ge e_0$, so we conclude that $R/fR$ has an isolated non-F-pure point at $\m$ if and only if 
  \[ -n - \ell \le M_e - (n+1)p^e \]
  for some $\ell \ge 1$ and all $e$. Since $\{M_e - (n+1)p^e\}_{e \ge e_0}$ is a nonincreasing sequence of integers, this sequence is bounded below 
  if and only if $M_e - (n+1)p^e$ is constant for $e \gg 0$. 
\end{proof}
\begin{remark}
  If $\tau(f\omop) = \m^j$ for some $j \ge 1$ then the proof shows that in fact $M_e-(n+1)p^e = -n-j$ for all $e\ge e_0$.
\end{remark}

\begin{remark}
  We note that if $M_e < \infty$ then $M_e - (n+1)p^e + d \le 1 + \frac{d}{p} - \frac{n+1}{p}$. Indeed, if $r \not\in \m\fpow{e}$ and
  $\deg(r) = M_e - 1$ then $r\not\in (\m\fpow{e}:\tau(f\omop))$. It follows that $r^pf^{p-1} \not\in \m\fpow{{e+1}}$. This implies
  \[ p(M_e - 1) + (p-1)d \le (n+1)(p^{e+1} - 1). \]
  Dividing both sides by $p$, we have that
  \[ M_e - (n+1)p^e + d \le 1 + \frac{d - (n+1)}{p}. \]
  In particular, as long as $d \le n+1$ or $p > d - (n+1)$ we have that $M_e - (n+1)p^e + d \le 1$. 
\end{remark}

\begin{defn}
  If $R/fR$ has an isolated non-F-pure point at $\m$, define $\delta(f) = M_e - (n+1)p^e$ for $e \gg 0$. 
\end{defn}

Of major importance to our proof of the main theorem is analysis of the following diagram of short exact sequences in local cohomology. 
This appears as \cite[Remark 2.2]{BS}.
\begin{remark}
  For $f \in R$ as above, the Frobenius map $F: R/fR \to R/fR$ fits into a diagram of short exact sequences
  \[\begin{CD}
      0 @>>> R[-d] @>f>> R @>>> R/fR @>>> 0\\
      @.    @V f^{p-1}FVV @V F VV @V F VV @.\\
      0 @>>> R[-d] @>f>> R @>>> R/fR @>>> 0.
  \end{CD}\]
  The long exact sequence in local cohomology now gives
  \[\begin{CD}
      0 @>>> H^n_\m(R/fR) @>>> H^{n+1}_\m(R)[-d] @>f>> H^{n+1}_\m(R) @>>> 0\\
    @.        @VFVV             @Vf^{p-1}FVV          @VFVV             @.\\
      0 @>>> H^n_\m(R/fR) @>>> H^{n+1}_\m(R)[-d] @>f>> H^{n+1}_\m(R) @>>> 0.
  \end{CD}\]
  The rightmost map is injective because $R$ is regular (and so is F-pure), so the snake lemma now implies that injectivity of the map on the left is equivalent to injectivity of 
  the middle map. 
\end{remark}

\begin{thm}\label{mainthm}
  Let $f \in R$ be homogeneous of degree $d$ and assume that $R/fR$ has an isolated non-F-pure point at $\m$. 
  Then the below Frobenius action is injective:
  \[ F:H^n_\m(R/fR)_{<\delta(f) + d} \to H^n_\m(R/fR)_{< p(\delta(f) + d)}. \]
\end{thm}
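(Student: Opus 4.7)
The plan is to apply the diagram of short exact sequences preceding the theorem statement: by the snake lemma, injectivity of $F$ on $H^n_\m(R/fR)_{<\delta(f)+d}$ is equivalent to injectivity of $f^{p-1}F$ on $H^{n+1}_\m(R)[-d]_{<\delta(f)+d}$, which after unwinding the degree shift is the same as injectivity of $f^{p-1}F$ on $H^{n+1}_\m(R)_{<\delta(f)}$ (with target $H^{n+1}_\m(R)_{<p\delta(f)+(p-1)d}$).

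To analyze this, I represent a homogeneous class $\eta \in H^{n+1}_\m(R)$ via the \v{C}ech complex as $\eta = [r/(x_0\cdots x_n)^{p^e}]$ with $r \in R$ homogeneous; the class vanishes iff $r \in \m\fpow{e}$ and its degree is $\deg(r) - (n+1)p^e$. Choosing $e$ large enough that Lemma \ref{MeLemma} forces $M_e - (n+1)p^e = \delta(f)$, the condition $\deg(\eta) < \delta(f)$ becomes $\deg(r) < M_e$, and the image $f^{p-1}F(\eta) = [f^{p-1}r^p/(x_0\cdots x_n)^{p^{e+1}}]$ vanishes iff $f^{p-1}r^p \in \m\fpow{e+1}$.

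The heart of the argument is the equivalence $f^{p-1}r^p \in \m\fpow{e+1}$ iff $r \in (\m\fpow{e} : \tau(f\omop))$. Using Remark \ref{testGens}, write $f^{p-1} = \sum_{\ul{i},b} f_{\ul{i},b}^p \lambda_b x^{\ul{i}}$, so $f^{p-1}r^p = \sum_{\ul{i},b} (f_{\ul{i},b} r)^p \lambda_b x^{\ul{i}}$. The needed input is the following lemma: since $R$ is a free $R^p$-module on the basis $\{\lambda_b x^{\ul{i}}\}_{0 \le i_j < p,\, b \in B}$ (combining F-finiteness of $k$ with flatness of Frobenius on the regular ring $R$), an element $\sum g_{\ul{i},b}^p \lambda_b x^{\ul{i}}$ lies in $\m\fpow{e+1}$ iff each coefficient $g_{\ul{i},b} \in \m\fpow{e}$. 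Granting this, we read off that $f^{p-1}r^p \in \m\fpow{e+1}$ iff $f_{\ul{i},b} r \in \m\fpow{e}$ for all $\ul{i},b$, i.e. iff $r \in (\m\fpow{e}:\tau(f\omop))$.

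The conclusion is then immediate: if $0 \ne \eta \in H^{n+1}_\m(R)_{<\delta(f)}$ satisfies $f^{p-1}F(\eta) = 0$, then $r$ would be a homogeneous element of $(\m\fpow{e}:\tau(f\omop))\setminus\m\fpow{e}$ of degree strictly less than $M_e$, contradicting the minimality in Definition \ref{MeDefn}. I expect the only real technical point to be the $R^p$-basis lemma identifying $\m\fpow{e+1}$ coordinate-wise; this is where F-finiteness and regularity of $R$ enter essentially, and once it is in hand the rest is a bookkeeping combination of the \v{C}ech description of $H^{n+1}_\m(R)$ with the explicit generators of $\tau(f\omop)$.
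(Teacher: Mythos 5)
Your proposal is correct and follows the same overall skeleton as the paper's proof: reduce via the snake lemma to injectivity of $f^{p-1}F$ on $H^{n+1}_\m(R)$, represent a putative kernel element as a \v{C}ech class $[r/(x_0\cdots x_n)^{p^e}]$ with $r \notin \m\fpow{e}$, establish that $f^{p-1}r^p \in \m\fpow{e+1}$ if and only if $r \in (\m\fpow{e}:\tau(f\omop))$, and then contradict the minimality of $M_e$. The one place where you diverge is in how the central equivalence is justified. The paper argues via colon ideals: $f^{p-1}r^p \in \m\fpow{e+1}$ iff $f^{p-1}\in(\m\fpow{e+1}:r^p)=(\m\fpow{e}:r)\frob$ (Kunz's flatness of Frobenius, plus the fact that colon ideals extend along flat maps), and then invokes Definition \ref{defnPairs} directly, which characterizes $\tau(f\omop)$ as the smallest ideal $\a$ with $f^{p-1}\in\a\frob$. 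You instead work with the explicit $R^p$-basis $\{\lambda_b x^{\ul i}\}$ of Remark \ref{testGens}, expand $f^{p-1}r^p=\sum (f_{\ul i,b}r)^p\lambda_b x^{\ul i}$, and use the coordinate-wise membership lemma to reduce to $f_{\ul i,b}r\in\m\fpow{e}$ for all $\ul i,b$; this uses the generator description of $\tau(f\omop)$ rather than the minimality characterization. Both justifications ultimately rest on the same fact (freeness of $R$ over $R^p$, equivalently Kunz's theorem in the F-finite regular setting), so neither is more general; yours is more hands-on and computational, the paper's is slightly more compact because it leans on the abstract colon-ideal identity. Your coordinate-wise lemma is indeed correct, since $\m\fpow{e+1}=(\m\fpow{e})\frob$ and freeness gives $(\m\fpow{e})\frob=\bigoplus_{\ul i,b}F(\m\fpow{e})\,\lambda_b x^{\ul i}$ as an $R^p$-submodule of $R$, but it would be worth stating and proving it explicitly rather than deferring it, as it is the only nontrivial technical input.
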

\begin{proof}
  Writing $N = \delta(f) + d$ we have the diagram in local cohomology
  \[\begin{CD}
      0 @>>> H^n_\m(R/fR)_{< N} @>>> H^{n+1}_\m(R)[-d]_{< N} @>\cdot f>> \cdots\\
        @.    @V F VV                     @V f^{p-1}F VV          @.\\
      0 @>>> H^n_\m(R/fR)_{< pN} @>>> H^{n+1}_\m(R)[-d]_{< pN+d(p-1)} @>\cdot f>> \cdots.
  \end{CD}\]
  As remarked above, injectivity of $F$ on the left is equivalent to that of the 
  middle map $f^{p-1}F$. Assume that we have a homogeneous $0 \ne \al \in H^{n+1}_\m(R)[-d]_{< N}= H^{n+1}_\m(R)_{<\delta(f)}$ such that 
  $f^{p-1}F(\al) = 0$. 
  We have a representation of $\al$ of the form
  \[ \al = \left[\frac{g}{(x_0\cdots x_n)^{p^e}}\right] \]
  with $g \not\in\m\fpow{{e}}$ and where we may assume that the power in the bottom is $p^e$ for some $e \gg 0$ by multiplying by an appropriate form of $1$. 
  Using this representation, we have
  \begin{align*}
    f^{p-1}F(\al) = 0 &\iff f^{p-1}g^p \in \m\fpow{e+1}\\
                      &\iff f^{p-1} \in \left(\m\fpow{e+1}:g^p\right) = \left(\m\fpow{e}:g\right)\frob\\
                      &\iff \tau(f\omop) \ssq (\m\fpow{e}:g) \\
                      &\iff g \in \left(\m\fpow{e}:\tau(f\omop)\right).
  \end{align*}
  Here the equality of colon ideals in the second line follows from Kunz's theorem \cite[Theorem 2.1]{kunz} which says Frobenius is flat if and only if $R$ is 
  regular, along with the fact that if $A \to B$ is a flat ring extension then $(I:_A J)B = (IB:_B JB)$ for any ideals $I, J \ssq A$. Thus, $\deg(g) \ge M_e$ and so
  \[ \deg(\al) = \deg(g) - (n+1)p^e \ge M_e - (n+1)p^e = \delta(f) \]
  This contradicts $\deg(\al) < \delta(f)$. 
\end{proof}
\begin{remark}
  The proof also shows that this bound is optimal: for $e \gg 0$ and an element $r \in (\m\fpow{e}:\tau(f\omop)) \setminus \m\fpow{e}$ homogenous of
  degree $M_e$, if we take $\al = [r/(x_0\cdots x_n)^{p^{e}}]$ then $\al \ne 0$ but $f^{p-1}F(\al) = 0$. 
\end{remark}

\begin{coro}\label{corollary}
  Let $f \in R$ be homogeneous of degree $d$ and assume that $(R/fR)_\m$ is not F-pure. Then $R/fR$ has an isolated non-F-pure point at $\m$ if and only if the 
  below Frobenius action is injective:
  \[ F:H^n_\m(R/fR)_{\le -n(d-1)} \to H^n_\m(R/fR)_{\le -pn(d-1)}. \]
\end{coro}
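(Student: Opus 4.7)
My plan is to deduce both directions from Theorem~\ref{mainthm}. For the forward direction, Theorem~\ref{mainthm} already gives injectivity on $H^n_\m(R/fR)_{<\delta(f)+d}$, so it suffices to prove the uniform lower bound $\delta(f) \ge -(n+1)(d-1)$; this forces $\delta(f)+d \ge -n(d-1)+1$, so every degree $\le -n(d-1)$ is strictly less than $\delta(f)+d$ and the forward implication follows. The converse will use the sharpness of Theorem~\ref{mainthm} as observed in the optimality remark after its proof.

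For the lower bound on $\delta(f)$, I note that the generators $f_{\ul{i}, b}$ of $\tau(f\omop)$ given by Remark~\ref{testGens} are homogeneous of degree $((p-1)d - |\ul{i}|)/p \le d-1$, so $\tau(f\omop)$ is a homogeneous $\m$-primary ideal (by the isolated hypothesis) generated in degree $\le d-1$. After enlarging $k$ to an infinite field (which preserves the relevant ideal containment), graded prime avoidance produces a homogeneous regular sequence $t_0, \ldots, t_n \in \tau(f\omop)_{d-1}$: at step $i$, the associated primes of $R/(t_0,\ldots,t_{i-1})$ are minimal primes of height $i \le n$, hence strictly contained in $\m$, and the degree-$(d-1)$ piece $\tau(f\omop)_{d-1}$ is not contained in any such prime $P$ (multiplying a generator $g \in \tau(f\omop) \setminus P$ by $x_j^{d-1-\deg g}$ for some $x_j \notin P$ yields an element of $\tau(f\omop)_{d-1} \setminus P$). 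The complete intersection $J = (t_0,\ldots,t_n) \ssq \tau(f\omop)$ is Artinian Gorenstein with socle in degree $(n+1)(d-2)$, so $\m^{(n+1)(d-2)+1} \ssq J \ssq \tau(f\omop)$. The claim from the proof of Lemma~\ref{MeLemma}, applied with $\ell = (n+1)(d-2)+1$, then gives $M_e \ge (n+1)p^e - n - \ell$ for $e \gg 0$, i.e., $\delta(f) \ge -(n+1)(d-1)$.

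For the converse I argue by contrapositive. If $R/fR$ does not have an isolated non-F-pure point at $\m$, Lemma~\ref{MeLemma} and Lemma~\ref{MeDecr} together force $M_e - (n+1)p^e \to -\infty$. I choose $e$ so large that $M_e - (n+1)p^e + d \le -n(d-1)$ and use the optimality remark following Theorem~\ref{mainthm} to obtain a nonzero $\alpha \in H^{n+1}_\m(R)$ of degree $M_e - (n+1)p^e$ with $f^{p-1}F(\alpha) = 0$. The snake lemma applied to the diagram preceding Theorem~\ref{mainthm}, together with injectivity of Frobenius on $H^{n+1}_\m(R)$, identifies $\ker F|_{H^n_\m(R/fR)}$ with $\ker(f^{p-1}F)|_{H^{n+1}_\m(R)[-d]}$ as graded modules, so $\alpha$ corresponds to a nonzero element of $H^n_\m(R/fR)$ in degree $M_e - (n+1)p^e + d \le -n(d-1)$ killed by $F$, contradicting injectivity on $H^n_\m(R/fR)_{\le -n(d-1)}$.

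The principal obstacle is establishing the lower bound $\delta(f) \ge -(n+1)(d-1)$; it rests on the degree bound $\le d-1$ for generators of $\tau(f\omop)$ coming from Remark~\ref{testGens} and on choosing a regular sequence inside $\tau(f\omop)_{d-1}$ via graded prime avoidance. The remaining steps reduce to reading off consequences of the diagrams and lemmas already developed in the paper.
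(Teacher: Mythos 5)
Your proposal is correct and follows essentially the same strategy as the paper: establish a lower bound on $\delta(f)$ by exhibiting a homogeneous regular sequence of length $n+1$ inside $\tau(f\omop)$ consisting of forms of degree $\le d-1$, deduce $\delta(f) + d > -n(d-1)$ and invoke Theorem~\ref{mainthm}, then handle the converse by contrapositive via Lemmas~\ref{MeDecr} and~\ref{MeLemma} and the sharpness remark. The one place you genuinely diverge is the construction of that regular sequence. The paper asserts that one can choose $n+1$ of the actual generators $f_{\ul{i},b}$ of $\tau(f\omop)$ forming a maximal regular sequence and works with their (possibly distinct) degrees $d_i$, obtaining $\delta(f) \ge -\sum d_i$ and then bounding $d_i \le d-1$. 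You instead pass to an infinite extension of $k$ and run graded prime avoidance to produce a regular sequence of forms of degree exactly $d-1$, giving the uniform bound $\delta(f) \ge -(n+1)(d-1)$. Your route has the small advantage that it does not require the unjustified claim that a regular sequence can be selected from among the particular generators in $\shF$ (which over a finite field is not an automatic consequence of prime avoidance), at the cost of a (routine) base-change argument to check that $\m^\ell \ssq \tau(f\omop) R_K$ descends to $\m^\ell \ssq \tau(f\omop)$. Incidentally, the paper's stated strict inequality $d_i < d-1$ is only valid when $d > p$; the non-strict $d_i \le d-1$, which is what you use, always holds and is all that is needed.
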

\begin{proof}
  Assume that $R/fR$ has an isolated non-F-pure point at $\m$. We show that $-n(d-1) < \delta(f) + d$. As in Remark \ref{testGens}, let 
  $\shF = \{f_{\ul{i}, b}\,|\, 0 \le i_j \le p;\, b \in B\}$ be a generating
  set for $\tau(f\omop)$. Since $R/fR$ has an isolated non-F-pure point at $\m$, there exist $n+1$ generators $f_0, \dots, f_n \in \shF$ 
  which form a maximal regular sequence. Write $d_i = \deg(f_i)$. 
  The proof method of \cite[Lemma 3.1]{BS} shows that $\m^{(\sum d_i)- n} \ssq (f_0, \dots, f_n)$.
  Indeed, let $\b = (f_0, \dots, f_n)$. Then the Hilbert series of $R/\b$ is \[ P(R/\b, t) = \prod_{i=0}^n \frac{1-t^{d_i}}{1-t}. \]
  This follows from \cite[Exercise 21.12(b)]{Eis} together with the facts that $P(k[x], t) = \frac{1}{1-t}$ and that $P(M \tensor N, t) = P(M, t)\cdot P(N, t)$ 
  whenever all quantities are defined. The degree of this polynomial is $(\sum_{i=0}^n d_i) - (n+1)$. It follows that there can be no monomials
  of degree greater than $(\sum d_i) - (n+1)$ in $R/\b$. This is equivalent to $\m^{(\sum d_i) - (n+1) + 1} \ssq \b$.

  From this we see that $\left(\m\fpow{e}:\tau(f\omop)\right) \ssq \left(\m\fpow{e}:\m^{(\sum d_i) - n}\right)$ and \cite[Lemma 3.2]{BS} tells us that
  \[ \left(\m\fpow{e}:\m^{(\sum d_i) - n}\right) = \m\fpow{e} + \m^{(n+1)p^e - (\sum d_i)}. \]
  Letting $e \gg 0$ and $r \in \left(\m\fpow{e}:\tau(f\omop)\right) \setminus \m\fpow{e}$ be homogeneous of degree $M_e$, the equality above shows us 
  that
  \[ \deg(r) = M_e \ge (n+1)p^e - \left(\sum d_i\right). \]
  By Lemma \ref{MeLemma} we now conclude $\delta(f) \ge -(\sum d_i)$. Thus, $\delta(f) + d > -(\sum d_i) + d - 1$. Since 
  $d_i = \deg(f_i)$ we have that $pd_i \le d(p-1)$ from which it follows that $d_i < d - 1$. Replacing each $d_i$ with $d-1$ we conclude
  \[ -n(d-1) < -\left(\sum d_i\right) + d - 1 < \delta(f) + d. \]

  Using the contrapositive, if $R/fR$ does not have an isolated non-F-pure point at $\m$, then lemmas \ref{MeDecr} and \ref{MeLemma} tell us 
  $\{M_e - (n+1)p^e\}_{e \ge e_0}$ is unbounded below. 
  If $r \in (\m\fpow{e}:\tau(f\omop)) \setminus \m\fpow{e}$ has degree $M_e$ then $f^{p-1}F([r/(x_0\cdots x_n)^{p^e}]) = 0$ but $[r/(x_0\cdots x_n)^{p^e}] \ne 0$. 
  Letting $e \gg 0$ such that $M_e - (n+1)p^e < -n(d-1)$, we see that the Frobenius action on $H^n_\m(R/fR)_{M_e - (n+1)p^e}$ is not injective. 
\end{proof}

\begin{exa}
  Let $f = x^2y^2 + y^2z^2 + z^2x^2 \in k[x,y,z]$ with $\char(k) > 2$. Then $\tau(f^{1 - \frac{1}{p}}) = \m$ but $f$ does not have an isolated singularity. 
  In this case, the Bhatt-Singh result \cite[Theorem 3.5]{BS} does not apply. Theorem \ref{mainthm} now tells us that the Frobenius action on $H^2_\m(R/fR)$ is 
  injective in degrees $\le 0$. Note that in this case, $H^2_\m(R/fR)_1 \ne 0$ but $H^2_\m(R/fR)_{\ge 2} = 0$ so the Frobenius action on $H^2_\m(R/fR)_1$ is zero.
\end{exa}

\begin{exa}
  We provide an example to show that $M_e - (n+1)p^e$ does not always stabilize at the first step. Let 
  \[ f = x_0^2x_1x_2x_3x_4 + x_0x_1^2x_2x_3x_4 + \cdots + x_0x_1x_2x_3x_4^2 + x_5^6 \in \bF_2[x_0, \dots, x_5]. \]
  Then $\tau(f^{1/2}) = (x_0, x_1, x_2, x_3, x_4, x_5^3)$. Now $M_1 = 5$, $M_2 = 16$, and we see that $M_1 - 6(2) = -7$ but $M_2 - 6(2^2) = -8$. 
  Since $\m^3 \ssnq \tau(f^{1/2})$ we have $-5-3 \le M_e - 6(2^e)$ so we see that $\delta(f) = -8$.
\end{exa}

\bibliographystyle{alpha}
\bibliography{refs}
\end{document}